\definecolor{marin}{rgb}   {0.,   0.3,   0.7} 
\definecolor{rouge}{rgb}   {0.8,   0.,   0.} 
\definecolor{sepia}{rgb}   {0.8,   0.5,   0.} 
\theoremstyle{plain}
\newtheorem{theorem}{Theorem}[section]
\newtheorem{definition}[theorem]{Definition}
\newtheorem{lemma}[theorem]{Lemma}
\newtheorem{proposition}[theorem]{Proposition}
\theoremstyle{remark}
\newtheorem{remark}[theorem]{Remark}
\newtheorem{notation}[theorem]{Notation}
\newtheorem{example}[theorem]{Example}
\numberwithin{equation}{section}
\newcommand{\QED}{\mbox{}\hfill \raisebox{-0.2pt}{\rule{5.6pt}{6pt}\rule{0pt}{0pt}} 
          \medskip\par}
\newcommand{\dd}{\mathrm{d}}
\newcommand{\N}{\mathbb{N}}
\newcommand{\Nc}{\mathcal{N}}
\newcommand{\C}{\mathbb{C}}
\newcommand{\T}{\mathbb{T}}
\newcommand{\Z}{\mathbb{Z}}
\newcommand{\eps}{\varepsilon}
\newcommand{\ttt}{{\tt t}}
\newcommand{\tx}{{\tt x}}
\def\F{\mathcal F}
\def\O{\mathcal O}
\def\tv{{\tt v}}
\def\tu{{\tt u}}
\def\({\left(}
\def\){\right)}
\def\<{\left\langle}
\def\>{\right\rangle}
\def\le{\leqslant}
\def\ge{\geqslant}
\def\1{{\bf 1}}
\def\Tend#1#2{\mathop{\longrightarrow}\limits_{#1\rightarrow#2}}
\def\d{{\partial}}
\def\l{\lambda}
\def\om{\omega}
\def\g{\gamma}
\def\eps{\varepsilon}
\begin{document}

\title[Energy cascades for NLS]{Energy cascades for NLS on $\T^d$} 

\author[R. Carles]{R\'emi Carles}
\address{CNRS \& Univ. Montpellier~2\\Math\'ematiques \\
  CC~051\\F-34095 Montpellier} 
\email{Remi.Carles@math.cnrs.fr}
\author[E.~Faou]{Erwan Faou}
\address{INRIA \& ENS Cachan Bretagne\\
Avenue Robert Schumann\\ F-35170 Bruz\\ France} 
\email{Erwan.Faou@inria.fr}
\subjclass[2000]{Primary 35Q55; Secondary 35C20, 37K55}
\begin{abstract}
  We consider the nonlinear Schr\"odinger equation with cubic
  (focusing or defocusing) nonlinearity on the multidimensional
  torus. For special small initial data containing only five modes, we
  exhibit a countable set of time layers in which arbitrarily large
  modes are created. The proof relies on a reduction to multiphase
  weakly nonlinear geometric optics, and on the study of a particular
  two-dimensional discrete dynamical system.
\end{abstract}

\thanks{This work was supported by the French ANR project
  R.A.S. (ANR-08-JCJC-0124-01)}   
 \maketitle

\section{Introduction and main result}
\label{sec:intro}
We consider the nonlinear Schr\"odinger equation
\begin{equation}
  \label{eq:nls0}
  i\d_t u +\Delta u = \l |u|^2 u,\quad x\in \T^d,
\end{equation}
with $d\ge 2$, where the sign of $\l\in \{-1,+1\}$ turns out to be
irrelevant in the analysis below. In the present analysis, we are
interested in the description of some energy exchanges between low and
high frequencies for particular solutions of this equation. We will
consider solutions with small initial values:  
\begin{equation}
  \label{eq:ci0}
  u(0,x) = \delta u_0(x),
\end{equation}
where $u_0\in H^1(\T^d)$ and $0<\delta \ll 1$. Replacing $u$ with
$\delta^{-1}u$, \eqref{eq:nls0}--\eqref{eq:ci0} is equivalent to
\begin{equation}
  \label{eq:nls}
  i\d_t u +\Delta u = \l \eps |u|^2 u\quad ;\quad u(0,x)=u_0(x),
\end{equation}
where $\eps=\delta^2$. 
Viewed as an infinite dimensional dynamical system in terms of the
Fourier variables of the solution, such an equation is {\em resonant}
in the sense that all the eigenvalues of the Laplace operator are
integers only, making possible nontrivial vanishing linear
combinations between the frequencies of the linear unperturbed
equation ($\eps = 0$). In such a situation, the perturbation
theory cannot be directly applied as in
\cite{Bam03,BG06,Bou96,CrWa94,ElKuk,FG10,Greb07}. Let us recall that in all
these works, the Laplace operator is perturbed by a typical potential
making resonances {\em generically} disappear.  In such situations and
when $u_0(x)$ is smooth enough, it is possible to prove the quasi
preservation of the Sobolev norms of the solution over very long time:
polynomial (of order $\eps^{-r}$ for all $r$) as in \cite{BG06},
exponentially large as in \cite{FG10}, or arbitrary large for a set of
specific solutions as in \cite{ElKuk}.  

In the resonant case considered in this paper, there is {\em a priori}
no reason to observe long times bounds for the Sobolev norms of the
solution.  
Despite this fact, Eqn.~\eqref{eq:nls0} possesses many quasi-periodic
solutions (see \cite{Bou98,Wang}).  

On the other hand, it has been recently shown in \cite{Iturbulent}
that in the defocusing case (Eqn.~\eqref{eq:nls0} with $\lambda = 1$),
solutions exist exhibiting energy transfers between low and high modes
which in turn induce a growth in the Sobolev norm $H^s$ with $s >
1$. Strikingly, such  phenomenon arises despite the fact that  $L^2$
and $H^1$ norms of the solution are bounded for all time.  

The goal of the present work is to describe {\em quantitatively} such
energy exchanges in the case of a particular explicit initial value
$u_0(x)$ made of five low modes.  
 Since we work on $\T^d$, the solution $u$ takes the form
\begin{equation*}
  u(t,x) = \sum_{j\in \Z^d} u_j(t) e^{ij\cdot x},
\end{equation*}
where $u_j(t) \in \C$ are the Fourier coefficients of the solution, and 
as long as $t$ does not exceed the lifespan of $u$. Here, for $j =
(j_1,\ldots,j_d) \in \Z^d$ and $x = (x_1,\ldots,x_d)$, we have $j\cdot
x = j_1 x_1 + \cdots + j_d x_d$. We also set $|j|^2 = j_1^2 + \cdots +
j_d^2 \in \N$. Let us introduce the Wiener algebra $W$ made of functions $f$ on
$\T^d$ of the form 
\begin{equation*}
  f(x) = \sum_{j\in \Z^d}b_j e^{ij\cdot x}
\end{equation*}
such that $(b_j)_{j\in \Z^d}\in \ell^1(\Z^d)$. With this space is
associated the norm  
\begin{equation*}
  \|f\|_W = \sum_{j\in \Z^d}|b_j|. 
\end{equation*}

Our main result is the following: 

\smallbreak

\begin{theorem}\label{theo:main}
 Let $d\ge 2$, and $u_0\in C^\infty(\T^d)$ given by
 \begin{equation*}
   u_0(x) = 1+ 2\cos x_1 +2 \cos x_2. 
 \end{equation*}
For $\l\in \{\pm 1\}$, the following holds. There exist
 $\eps_0,T>0$ such that for $0<\eps\le \eps_0$, \eqref{eq:nls} has a
 unique solution $u\in
 C([0,T/\eps];W)$, and there exist $C_0>0$ and $C>0$ such that:
\begin{equation*}
\forall j\in \Nc_*, \, \exists c_j \neq 0, \, \forall t \in [0,T/\eps],  \quad
    \left\lvert u_j\(t\) - c_j (\eps t)^{|j|^2 - 1}\right  \rvert \le
    (C_0 \eps t )^{|j|^2} + C\eps, 
\end{equation*}
where the set $\Nc_*$ is given by 
\begin{equation}
\label{Netoil}
  \Nc_*=\left\{ (0,\pm 2^p),(\pm 2^p,0),
(\pm 2^p,\pm 2^p),(\mp 2^p,\pm
    2^p),\ p\in \N\right\}\times\{0_{\Z^{d-2}}\}.
\end{equation}
Arbitrarily high modes appear with equal intensity along a cascade of
time layers:
 \begin{align*}
   & \forall \g \in ]0,1[, \ \forall \theta<\frac{1}{4},\ \forall
   \alpha>0,\quad \exists 
   \eps_1\in ]0, \eps_0],  \quad \forall
   \eps\in ]0,\eps_1],\\
   & \forall j\in \Nc_*,\ \lvert j\rvert <\alpha \(\log
   \frac{1}{\eps}\)^\theta ,\quad \left\lvert
     u_j\(\frac{2}{\eps^{1-\g/(|j|^2-1)}}\)\right\rvert \ge
   \frac{\eps^\g}{4} .
\end{align*}
\end{theorem}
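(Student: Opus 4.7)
The plan is to treat the cubic nonlinearity as a slow, resonance-driven perturbation on the time scale $\tau=\eps t$. Substituting the ansatz $u(t,x)=\sum_{j\in\Z^d}a_j(\tau)e^{i(j\cdot x-|j|^2 t)}$ into \eqref{eq:nls} and matching Fourier coefficients produces
\[
i\d_\tau a_j=\l\sum_{j_1-j_2+j_3=j}a_{j_1}\bar a_{j_2}a_{j_3}\,e^{it\omega},\quad \omega=|j_1|^2-|j_2|^2+|j_3|^2-|j|^2.
\]
An integration by parts in $t$ shows that the non-resonant terms ($\omega\ne 0$) contribute only $O(\eps)$ in the Wiener algebra uniformly on $[0,T/\eps]$; averaging therefore reduces the problem to the resonant normal form
\[
i\d_\tau a_j=\l\sum_{\substack{j_1-j_2+j_3=j\\|j_1|^2-|j_2|^2+|j_3|^2=|j|^2}}a_{j_1}\bar a_{j_2}a_{j_3},
\]
which is well posed in $\ell^1(\Z^d)$ on some fixed interval $[0,T]$ by Cauchy--Lipschitz. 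This simultaneously yields existence of $u$ in $C([0,T/\eps];W)$ and the additive $C\eps$ error term in the main estimate.

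I would then exploit the specific form of $u_0$. The initial coefficients $a_j(0)$ are supported on $\{(0,0),(\pm 1,0),(0,\pm 1)\}\times\{0\}$, and a parity/symmetry argument confines the entire flow to the sublattice $\Z^2\times\{0\}$, reducing the problem to a $2$D discrete dynamical system on $\Z^2$. The combinatorial heart of the argument is then to establish by induction on $m=|j|^2$ that $a_j(\tau)$ vanishes to order exactly $m-1$ at $\tau=0$ for each $j\in\Nc_*$, with nonzero leading Taylor coefficient $c_j$. The lower bound on the order of vanishing follows from the minimal number of cubic interactions needed to reach $j$ from the initial support; the matching upper bound, namely $c_j\ne 0$, requires exhibiting a non-cancelling resonant triple $(j_1,j_2,j_3)$ with previously populated modes of strictly smaller squared norm, realizing the doublings $(k,0)\mapsto(2k,0)$, $(0,k)\mapsto(0,2k)$ and $(\pm k,\pm k)\mapsto(\pm 2k,\pm 2k)$ that generate $\Nc_*$.

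With the Taylor expansion in hand, standard analyticity estimates for the flow of the normal form in the Wiener norm give $|a_j(\tau)-c_j\tau^{|j|^2-1}|\le (C_0\tau)^{|j|^2}$ on $[0,T]$; substituting $\tau=\eps t$ and incorporating the averaging remainder produces the first displayed inequality of the theorem. For the cascade, I plug $\tau_j=2\eps^{\g/(|j|^2-1)}$ into this estimate, so that the leading term has size $|c_j|\,\tau_j^{|j|^2-1}=|c_j|\,2^{|j|^2-1}\eps^\g$, of order $\eps^\g$, while the error $(C_0\tau_j)^{|j|^2}=C_0^{|j|^2}\eps^{\g|j|^2/(|j|^2-1)}$ is dominated by $\eps^\g$ precisely when $|j|^4\log C_0\ll\log(1/\eps)$; this is exactly where the restriction $|j|<\alpha(\log(1/\eps))^\theta$ with $\theta<1/4$ enters. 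The $C\eps$ term is also negligible since $\g<1$, and the lower bound $\eps^\g/4$ drops out.

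The main obstacle I foresee is the non-vanishing claim $c_j\ne 0$, together with a quantitative lower bound of the form $|c_j|\ge 2^{1-|j|^2}$ needed to extract the uniform constant $1/4$ at all scales $|j|$. A priori the resonant sum defining $c_j$ could vanish through cancellations between the many admissible quadruples $(j_1,j_2,j_3)$ feeding $j$; ruling this out stage by stage must use the very specific arithmetic of the five-mode initial data $1+2\cos x_1+2\cos x_2$ and the geometry of $\Nc_*$, and is precisely the two-dimensional discrete dynamical system announced in the abstract. The remaining ingredients --- the normal form reduction, the analyticity estimate in the Wiener norm, and the scale-by-scale choice of $\tau_j$ --- are either standard or reduce to checking exponents.
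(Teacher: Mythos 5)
Your overall strategy is the paper's: reduce to the resonant normal form on the slow time $\tau=\eps t$, confine the dynamics to $\Z^2\times\{0\}$, expand $a_j$ in Taylor series at $\tau=0$, and pick the time layer $\tau_j$ so that the leading term has size $\eps^\g$ while the remainder and the $O(\eps)$ averaging error are both dominated. The exponent bookkeeping you do at the end (the choice $\tau_j=2\eps^{\g/(|j|^2-1)}$ and the constraint $\theta<1/4$) matches the paper.

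The gap is exactly the one you flag in your last paragraph, and it is a genuine one: you acknowledge that $c_j\neq 0$ and the quantitative lower bound on $|c_j|$ are "the main obstacle," that cancellation among "the many admissible quadruples" must be ruled out "stage by stage," but you do not say how. The paper resolves this not by estimating a large resonant sum, but by a geometric uniqueness statement (its Lemma~\ref{lem:creation}): after $n$ iterations the reachable modes form a square with vertex set $\Nc_*^{(n)}$ (alternately axis-parallel and diagonal), and each vertex $j\in\Nc_*^{(n)}$ is fed \emph{at leading Taylor order} by exactly one resonant triple $(0,k,m)$, up to swapping $k$ and $m$, with $k,m\in\Nc_*^{(n-1)}$. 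There is therefore nothing to cancel: the leading coefficient obeys a one-term recursion
\begin{equation*}
\alpha(n)=2\alpha(n-1)+1,\qquad c(n)=-2i\l\,\frac{c(n-1)^2}{2\alpha(n-1)+1},\qquad \alpha(0)=0,\ c(0)=1,
\end{equation*}
so $\alpha(n)=2^n-1=|j|^2-1$ and $c(n)$ is an explicit nonzero product, with the rough bound $|c(n)|\ge 2^{-2^n}=2^{-|j|^2}$ (essentially the bound you guessed, $2^{1-|j|^2}$). Without this uniqueness input, your induction "exhibits a non-cancelling triple" but cannot rule out that other triples at the same order destroy the leading coefficient; the paper's whole point is that for the extremal set $\Nc_*$ there are no other triples at that order. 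Since you explicitly defer this step rather than supply it, the proposal does not prove the nonvanishing of $c_j$ nor the quantitative lower bound, and hence does not establish the cascade estimate.

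Two smaller remarks: the "analyticity estimate in the Wiener norm" giving the $(C_0\tau)^{|j|^2}$ remainder uniformly in $j$ is the content of the Cauchy estimate \eqref{eq:anal} in the paper's Lemma~\ref{lem:exista}, and needs to be stated with the constant $C_0$ \emph{independent of $j$ and $n$}; and the factor $2^{|j|^2-1}$ from $\tau_j^{|j|^2-1}$ should be carried along when you compare the leading term against the remainder, since it is precisely what makes the product $|c_j|\,2^{|j|^2-1}$ bounded below uniformly and yields the constant $1/4$.
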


\smallbreak
This result expresses the possibility of nonlinear exchanges in
\eqref{eq:nls}: while the high modes in the set $\Nc_*$ are equal to
zero at time $t = 0$, they are significantly large in a time that
depends on the mode. As this time increases with the size of the mode,
this is an {\em energy cascade} in the sense of \cite{cricri}.  To our 
knowledge, this result is the first one where such a 
dynamics is described so precisely as to quantify the time of ignition
of different modes. 
\smallbreak


The proof of this theorem relies on the following ingredients: 

\begin{itemize}
\item An approximation result showing that the analysis of the
  dynamics of \eqref{eq:nls0} over a time of order
  $\mathcal{O}(1/\varepsilon)$ can be reduced to the study of  an
  infinite dimensional system for the amplitudes of the Fourier
  coefficients $u_j$ (in a geometric optics framework). Let us mention
  that this reduced system exactly corresponds to the {\em resonant
    normal form} system obtained after a first order Birkhoff
  reduction (see \cite{KP} for the one dimensional case). We detail
  this connection between geometric optics and normal forms in the
  second section.  
\item A careful study of the dynamics of the reduced system. Here
  we use the particular structure of the initial value which consists
  of five modes generating infinitely many new frequencies through the
  resonances interactions in the reduced system. A Taylor expansion
  (in the spirit of \cite{Car07})
  then shows how all the frequencies should be {\em a priori} turned
  on in finite time. The particular geometry of the energy repartition
  between the frequencies then makes possible to estimate precisely
  the evolution of the particular points of the set $\Nc_*$ in
  \eqref{Netoil} and to quantify the energy exchanges between them.  
\end{itemize}

The construction above is very different from the one in
\cite{Iturbulent}. Let us mention that it is also valid only up to a
time of order $\mathcal{O}(1/\varepsilon)$ (which explains the
absence of difference between the focusing and defocusing
cases). After this time, the
nature of the dynamics should change completely as all the frequencies
of the solution would be significantly  present in the
system, and the nature of the nonlinearity should become relevant. 

\section{An approximation result in geometric optics}
\label{sec:og}


For a given element $(\alpha_j)_{j\in \Z^d}\in \ell^1(\Z^d)$, 
we define the following infinite dimensional {\em resonant system} 
\begin{equation}
  \label{eq:a}
  i\dot a_j = \l \sum_{(k,\ell,m)\in I_j}a_k\overline a_\ell a_m \quad
  ;\quad a_j(0)=\alpha_j. 
\end{equation}
where   $I_j$ is the set of \emph{resonant} indices (see \cite{KP})
associated with $j$ defined by: 
  \begin{equation}
    \label{eq:Ij}
    I_j =\left\{(k,\ell,m)\in \Z^{3d}\mid \, j = k  - \ell +m, 
\mbox{ and }  |j|^2 = |k|^2 - |\ell|^2+|m|^2\, \right\}.  
  \end{equation}
With these notations, we have the following result: 
\begin{proposition}
\label{prop:approx}
Let $u_0(x) \in W$ and $(\alpha_j)_{j\in \Z^d}\in \ell^1(\Z^d)$ its
Fourier coefficients. There exists $T>0$ and 
  a unique analytic solution $(a_j)_{j\in \Z^d}: [0,T] \to
  \ell^1(\Z^d)$ to the system \eqref{eq:a}.  
Moreover, 
there exists $\eps_0(T)>0$ such that for
$0<\eps\le \eps_0(T)$, the exact solution  to 
\eqref{eq:nls} satisfies $u \in C([0,\frac{T}{\varepsilon}];W)$ and
there exists $C$ 
independent of $\eps\in ]0,\eps_0(T)]$ such that
\begin{equation*}
  \sup_{0\le t\le \frac{T}{\varepsilon}}\|u -v^\varepsilon(t)\|_W\le C\eps. 
\end{equation*}
where 
\begin{equation}
\label{eq:rfn}
v^\varepsilon(t,x) = \sum_{j \in \Z^d} a_j(\varepsilon t) e^{i j \cdot x - it|j|^2}. 
\end{equation}
\end{proposition}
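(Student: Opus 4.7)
The plan is to move to the interaction picture $b_j(t) := u_j(t) e^{it|j|^2}$, perform a normal form change of variables that removes the non-resonant oscillations in the equation satisfied by $b$, and then compare the result to $a_j(\varepsilon t)$ by Gronwall on the rescaled time $\tau := \varepsilon t \in [0,T]$. Local well-posedness of \eqref{eq:a} in $\ell^1(\Z^d)$ is immediate: the constraint $j = k - \ell + m$ makes the right-hand side a discrete convolution, so Young's inequality gives $\|F(a)\|_{\ell^1} \le \|a\|_{\ell^1}^3$ for the cubic vector field $F$, which is therefore Lipschitz on bounded sets. Cauchy--Lipschitz in the Banach space $\ell^1(\Z^d)$ yields a unique local solution on some $[0,T]$ with $T > 0$ depending only on $\|u_0\|_W = \|(\alpha_j)\|_{\ell^1}$, on which $\|a(\tau)\|_{\ell^1}$ is bounded by, say, $2\|u_0\|_W$; analyticity in $\tau$ is automatic since $F$ is polynomial in $a,\bar a$.

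A direct substitution in \eqref{eq:nls} gives, as long as $u(t) \in W$,
\[
  i \dot b_j = \lambda \varepsilon \sum_{k - \ell + m = j} b_k \bar b_\ell b_m \, e^{-it \omega_{k\ell m j}}, \qquad \omega_{k\ell m j} := |k|^2 - |\ell|^2 + |m|^2 - |j|^2 \in \Z,
\]
and splitting according to $\omega_{k\ell m j} = 0$ (i.e.\ $(k,\ell,m) \in I_j$) or $|\omega_{k\ell m j}| \ge 1$, I would introduce the normal-form correction
\[
  \tilde b_j(t) := b_j(t) - \varepsilon \lambda \sum_{\substack{k-\ell+m=j \\ (k,\ell,m) \notin I_j}} \frac{b_k\, \bar b_\ell\, b_m}{\omega_{k\ell m j}}\, e^{-it \omega_{k\ell m j}}.
\]
Because $|\omega|^{-1} \le 1$ off-resonance and the index sum is again convolutive, Young yields $\|\tilde b - b\|_{\ell^1} \le \varepsilon \|b\|_{\ell^1}^3$. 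Differentiating $\tilde b_j$ in time and re-expanding $\partial_t(b_k \bar b_\ell b_m)$ via the $b$-equation absorbs the non-resonant piece of $i \dot b_j$ and leaves only a quintic remainder:
\[
  i \dot{\tilde b}_j = \lambda \varepsilon \sum_{(k,\ell,m) \in I_j} \tilde b_k\, \bar{\tilde b}_\ell\, \tilde b_m \;+\; \varepsilon^2 S_j(t), \qquad \|S(t)\|_{\ell^1} \le C \|b(t)\|_{\ell^1}^5,
\]
the bound on $S$ following from iterated Young and from $|\omega|^{-1} \le 1$; replacing $b$ by $\tilde b$ in the resonant cubic costs another $O(\varepsilon)$ that is absorbed in $S$.

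Setting $e(t) := \tilde b(t) - a(\varepsilon t)$ and subtracting the above from the equation satisfied by $a(\varepsilon t)$, the Lipschitz character of the resonant cubic on bounded sets of $\ell^1$ gives, in rescaled time,
\[
  \|e(\tau)\|_{\ell^1} \le C \int_0^\tau \|e(s)\|_{\ell^1}\, ds + C \varepsilon \tau,
\]
hence $\|e(\tau)\|_{\ell^1} \le C \varepsilon$ on $[0,T]$ by Gronwall; combined with $\|\tilde b - b\|_{\ell^1} \le C \varepsilon$ this yields $\|u(t) - v^\varepsilon(t)\|_W \le C \varepsilon$ on $[0, T/\varepsilon]$. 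Existence of $u$ in $C([0, T/\varepsilon]; W)$ is obtained by a standard continuity argument: as long as $\|b(t)\|_{\ell^1} \le 3\|u_0\|_W$, the preceding estimate and the uniform bound on $\|a(\varepsilon t)\|_{\ell^1}$ force $\|b(t)\|_{\ell^1} \le 2\|u_0\|_W + C\varepsilon$, closing the bootstrap for $\varepsilon \le \varepsilon_0(T)$ small enough. The main technical point I expect is verifying that $\partial_t(b_k \bar b_\ell b_m)$, once the $b$-equation is substituted, still produces a quintic sum controllable in $\ell^1$ by $\|b\|^5$: at each of the three positions the time derivative brings in a further convolution of three copies of $b$, and the outer factor $\omega^{-1}$ remains bounded by one, so Young's inequality applies repeatedly in the expected way.
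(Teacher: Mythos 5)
Your proof is correct, but it follows the \emph{Birkhoff normal form} route, which the paper mentions in \S\ref{sec:normal} as an alternative but does not carry out; the paper's own proof is via geometric optics. Concretely, the paper rescales $(\ttt,\tx)=(\eps t,\eps x)$, inserts the WKB ansatz $\tv^\eps=\sum_j a_j(\ttt)e^{i\phi_j/\eps}$, and writes the error $\tw^\eps=\tu^\eps-\tv^\eps$ via Duhamel; the non-resonant source term $r^\eps$ is then killed by a single integration by parts in time, using the gap $\bigl||\kappa_{k,\ell,m}|^2-\omega_{k,\ell,m}\bigr|\ge 1$. You instead stay in the original time scale, pass to the interaction picture $b_j=u_j e^{it|j|^2}$, and subtract the explicit near-identity correction $\tilde b_j=b_j-\eps\lambda\sum_{\mathrm{non\text{-}res}}\omega^{-1}b_k\bar b_\ell b_m e^{-it\omega}$ (well-defined because $|\omega|\ge1$), after which the non-resonant cubic cancels and only an $\eps^2$-quintic remainder survives, whence Gronwall in the slow time $\tau=\eps t$. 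The two arguments exploit the same non-resonance gap; your version is more in the spirit of dynamical systems/normal forms and makes the cancellation structural (it produces a conjugated vector field), while the paper's version is more elementary in its input (just Duhamel plus integration by parts) and keeps the multiscale structure explicit. A couple of minor points you should tighten if you write this out in full: (i) your $S_j$ also contains the error from replacing $b$ by $\tilde b$ inside the resonant cubic, so the bound should read $\|S\|_{\ell^1}\le C(\|b\|_{\ell^1}^3+\|b\|_{\ell^1}^5)$ rather than a pure quintic, which changes nothing since you work on a bounded set; (ii) the initial datum for $e$ is $e(0)=\tilde b(0)-\alpha=-\eps\lambda N(0)=\O(\eps)$, not zero, so this needs to be added to the Gronwall input, again harmless; (iii) the assertion that real-analyticity of $\tau\mapsto a(\tau)$ is ``automatic'' for a polynomial vector field is true but should be justified, e.g.\ by the same Cauchy-type derivative bounds the paper uses in Lemma~\ref{lem:exista} (which, incidentally, are also needed quantitatively later in the paper in the form of \eqref{eq:anal}, so it is worth recording them explicitly rather than invoking analyticity abstractly).
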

\begin{remark}
  Even though it is not emphasized in the notation, the function $u$
  obviously depends on $\eps$, which is present in \eqref{eq:nls}.
\end{remark}
We give below a (complete but short) proof of this result using
geometric optics. Let us mention however that we can also prove this
proposition using a Birkhoff transformation of \eqref{eq:nls} in {\em
  resonant normal form} as in \cite{KP}. We give some details
below. There is also an obvious connection with the {\em modulated
  Fourier expansion} framework developed in \cite{GL} in the
non-resonant case (see also
\cite{vV05}). 

\subsection{Solution of the resonant system}
The first part of Proposition \ref{prop:approx} is a consequence of
the following result:  
\begin{lemma}\label{lem:exista}
  Let $\alpha = (\alpha_j)_{j\in \Z^d}\in \ell^1(\Z^d)$. There exists $T>0$ and
  a unique analytic solution $(a_j)_{j\in \Z^d}:[0,T] \to \ell^1(\Z^d)$ to
  the system \eqref{eq:a}. 
Moreover, there exists constants $M$ and $R$ such that for all $n \in \N$ and
all $s \le T$,  
\begin{equation}
\label{eq:anal}
\forall\, j \in \Z^d,\quad \left|\frac{\dd^n a_j}{\dd t^n}(s) \right|
\le M R^n n!  
\end{equation}
\end{lemma}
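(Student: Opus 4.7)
The plan is to treat \eqref{eq:a} as a Banach-valued ODE on $\ell^1(\Z^d)$. Set
\[
F(a)_j := \sum_{(k,\ell,m)\in I_j} a_k\,\overline{a_\ell}\,a_m.
\]
Since $I_j$ is contained in the set of triples with $j=k-\ell+m$, I would first note that
\[
\|F(a)\|_{\ell^1} \le \sum_{j\in\Z^d}\sum_{k-\ell+m=j}|a_k|\,|a_\ell|\,|a_m| = \|a\|_{\ell^1}^3,
\]
i.e.\ $F$ is a bounded trilinear (hence real-analytic) map from $\ell^1$ to itself; this is just the Wiener-algebra property $\|fg\|_W\le\|f\|_W\|g\|_W$ read off at the level of Fourier coefficients. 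The Cauchy-Lipschitz theorem in $\ell^1$ then yields a unique maximal solution $a\in C^1([0,T_{\max});\ell^1)$ of $\dot a=-i\l F(a)$, $a(0)=\alpha$. An elementary argument on $N(t):=\|a(t)\|_{\ell^1}$, starting from $N'(t)\le N(t)^3$, shows that $N$ stays below $2\|\alpha\|_{\ell^1}$ on an interval $[0,T]$ of length of order $\|\alpha\|_{\ell^1}^{-2}$.

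For the analyticity bound \eqref{eq:anal}, I would promote this to complex time. Because $F$ is polynomial on $\ell^1$, the Picard iteration
\[
a^{(n+1)}(z) = \alpha - i\l\int_0^z F(a^{(n)}(\zeta))\,\dd\zeta
\]
converges in $\ell^1$ on any complex disk $\{|z-s|<\rho\}$ with $\rho\gtrsim\|a(s)\|_{\ell^1}^{-2}$, producing a holomorphic $\ell^1$-valued extension of $a$. Applying this around every $s\in[0,T]$ using the uniform bound $\|a(s)\|_{\ell^1}\le 2\|\alpha\|_{\ell^1}$, I obtain a holomorphic extension $a(z)$ on a complex tube $\{z\in\C:\mathrm{dist}(z,[0,T])<\rho_*\}$, on which $\sup_z\|a(z)\|_{\ell^1}\le M$ for some $M$ independent of~$j$. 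Cauchy's integral formula applied coordinate-wise then gives, for every $s\in[0,T]$ and every $j\in\Z^d$,
\[
\left|\frac{\dd^n a_j}{\dd t^n}(s)\right| \le \frac{n!}{(\rho_*/2)^n}\sup_{|z-s|=\rho_*/2}|a_j(z)| \le M R^n n!
\]
with $R=2/\rho_*$, which is \eqref{eq:anal}.

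The argument is really the classical analytic ODE theory in a Banach space; the only point which could look delicate, namely the uniformity in $j$ of the constants $M,R$, is built in for free, since $|a_j(z)|\le\|a(z)\|_{\ell^1}$ absorbs the index. A purely real-variable alternative would be to differentiate \eqref{eq:a} repeatedly and bound $a_j^{(n)}$ by a combinatorial sum over rooted ternary trees with $n$ internal nodes, in the spirit of a B-series expansion, and to control the tree count by Stirling; this works but is heavier, and the Cauchy-estimate route hides the combinatorics inside the holomorphic extension. If I had to point to a real obstacle it would be bookkeeping the dependence of $\rho_*$ and $M$ on $\|\alpha\|_{\ell^1}$, but this is not needed at the qualitative level required by the statement.
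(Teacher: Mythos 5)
Your proof is correct and rests on the same pillars as the paper's: the Wiener-algebra / $\ell^1$ convolution bound $\|F(a)\|_{\ell^1}\le\|a\|_{\ell^1}^3$, time-analyticity of the solution of the Banach-valued ODE, and Cauchy estimates to convert analyticity into the uniform factorial bound \eqref{eq:anal}, with the uniformity in $j$ coming for free from $|a_j(z)|\le\|a(z)\|_{\ell^1}$. The one place you genuinely diverge is the route to analyticity. The paper first invokes \cite{CDS10} for local existence and continuity in $\ell^1$, bootstraps to $C^\infty$, then proves by induction the real-variable derivative bound $\|a^{(n)}(s)\|_{\ell^1}\le 3\cdot5\cdots(2n+1)\|a(s)\|_{\ell^1}^{2n+1}$, reads off a positive radius of convergence of the Taylor series at $t=0$, and applies Cauchy estimates to that power series on $B(0,2T)$. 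You instead get existence directly from Cauchy--Lipschitz (so you do not need the external citation), establish an a priori $\ell^1$ bound on $[0,T]$, and then build a holomorphic $\ell^1$-valued extension of $a$ on a uniform complex tube around $[0,T]$ by running Picard in complex time, applying Cauchy's formula coordinatewise on that tube. Your version is more self-contained and makes the uniformity in $s$ manifest (one tube covers all of $[0,T]$ at once, rather than pushing everything back to the power series at the origin), whereas the paper's inductive factorial bound is more hands-on and avoids complexifying the ODE. Both are entirely standard analytic-ODE arguments and lead to the same conclusion; the only bookkeeping difference is that the effective constants $M,R$ (equivalently $\rho_*$) depend on $\|\alpha\|_{\ell^1}$ in both cases, which, as you note, is immaterial for the statement.
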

\begin{proof}
In \cite{CDS10}, the existence of a time $T_1$ and continuity in time
of the solution $a(t) = (a_j(t))_{j \in \Z^d}$ in $\ell^1$ is
proved. As $\ell^1$ is an algebra, a bootstrap  
 argument shows that $a(t) \in C^\infty\([0,T_1];\ell^1(\Z^d)\)$. 
 From \eqref{eq:a} we immediately obtain for $s \in [0,T_1]$, 
$$
\| \dot a(s) \|_{\ell^1} \le 3\| a(s) \|_{\ell^1}^3,
$$
and by induction
$$
\left\lVert a^{(n)}(s) \right\rVert_{\ell^1} \le 3\cdot 5 \cdots
\cdot (2n + 1) \| a(s) 
\|_{\ell^1}^{2n +1}, 
$$
where $a^{(n)}(t)$ denote the $n$-th derivative of $a(t)$ with respect
to time. This implies
$$
\left\lVert a^{(n)}(0) \right\rVert_{\ell^1} \le 3\cdot 5 \cdots
\cdot (2n + 1) \| \alpha \|_{\ell^1}^{2n +1} \le \| \alpha
\|_{\ell^1} n ! \(3 \| \alpha \|_{\ell^1}^2\)^n, 
$$
which shows the analyticity of $a$ for $t \le T_2 = \frac{1}{6} \|
\alpha \|_{\ell^1}^{-2}$. The estimate \eqref{eq:anal} is then a
standard consequence of Cauchy estimates applied to the complex power
series $\sum_{n \in \N} \frac{1}{n!}a^{(n)}(0) z^n$ defined in the
ball $B(0,2T)$ where $2T = \min(T_1,T_2)$.  
\end{proof}

\subsection{Geometric optics}
\label{sec:scaling}
Let us introduce the scaling
\begin{equation}
  \label{eq:scaling}
  \ttt = \varepsilon t, \quad \tx = \varepsilon x, \quad 
  u(t,x)= \tu^\eps \(\eps t,\eps x\).
\end{equation}
Then \eqref{eq:nls} is \emph{equivalent} to:
\begin{equation}
  \label{eq:nlssemi}
  i\eps\d_\ttt \tu^\eps +\eps^2 \Delta \tu^\eps = \l \eps |\tu^\eps|^2
  \tu^\eps\quad ;\quad \tu^\eps(0,\tx) =
  u_0\(\frac{\tx}{\eps}\)=\sum_{j\in \Z^d}\alpha_j e^{ij\cdot \tx/\eps}.
\end{equation}
In the limit $\eps\to 0$, multiphase geometric optics provides an
approximate solution for \eqref{eq:nlssemi}. The presence of the
factor $\eps$ in front of the nonlinearity has two consequences: in
the asymptotic regime $\eps\to 0$, the eikonal equation is the same as
in the linear case $\l=0$, but the transport equation describing the
evolution of the amplitude is nonlinear. This explains why this
framework is referred to as \emph{weakly nonlinear geometric
  optics}. Note that simplifying by $\eps$ in the Schr\"odinger
equation \eqref{eq:nlssemi}, we can view the limit $\eps\to 0$ as a
small dispersion limit, as in e.g. \cite{Kuksin2}.

We sketch the approach described more precisely in \cite{CDS10}. 
The approximate solution provided by geometric optics has the form
\begin{equation}
\label{eq:veps}
  \tv^\eps(\ttt,\tx) = \sum_{j\in\Z^d}a_j(\ttt) e^{\phi_j(\ttt,\tx)/\eps},
\end{equation}
where we demand $\tu^\eps=\tv^\eps$ at time $\ttt=0$, that is
\begin{equation*}
  a_j(0)=\alpha_j\quad ;\quad \phi_j(0,\tx )=j\cdot \tx. 
\end{equation*}
Plugging this ansatz into \eqref{eq:nlssemi} and ordering the powers
of $\eps$, we find, for the $\O(\eps^0)$ term:
\begin{equation*}
  \d_\ttt \phi_j +|\nabla \phi_j|^2 =0 \quad ;\quad \phi_j(0,\tx)=j\cdot \tx.
\end{equation*}
The solution is given explicitly by
\begin{equation}
  \label{eq:phi}
  \phi_j(\ttt,\tx)= j\cdot \tx - \ttt\lvert j\rvert^2. 
\end{equation}
The amplitude $a_j$ is given by the $\O(\eps^1)$ and is given by
equation \eqref{eq:a} after   projecting the wave along the
oscillation $e^{i\phi_j/\eps}$ according to the  
 the set of \emph{resonant} phases given by $I_j$ (Eqn. \eqref{eq:Ij}). 
By doing so, we have dropped the oscillations of the form $e^{i(k\cdot
  \tx-\om \ttt)/\eps}$ with $\om\not = |k|^2$, generated by nonlinear
interaction: the phase $k\cdot
  \tx-\om \ttt$ does not solve the eikonal equation, and the corresponding
  term is negligible in the limit $\eps\to 0$ thanks to a
  non-stationary phase argument. 

Proposition \ref{prop:approx} is a simple corollary of 
the following result that is established in \cite{CDS10}. We sketch
the proof in
Appendix~\ref{sec:wnlgo}.  
\begin{proposition}\label{prop:approxOG}
  Let $\(\alpha_j\)\in
\ell^1(\Z^d)$, and $\tv^\eps$ be defined by \eqref{eq:veps} and
\eqref{eq:phi}. Then there exists $\eps_0(T)>0$ such that for 
$0<\eps\le \eps_0(T)$, the exact solution to 
\eqref{eq:nlssemi} satisfies $\tu^\eps \in C([0,T];W)$, where $T$ is
given by Lemma~\ref{lem:exista}. In addition,
$\tv^\eps$ approximates $\tu^\eps$ up to $\O(\eps)$: there exists $C$
independent of $\eps\in ]0,\eps_0(T)]$ such that
\begin{equation*}
  \sup_{0\le t\le T}\|\tu^\eps(t) -\tv^\eps(t)\|_W\le C\eps. 
\end{equation*}
\end{proposition}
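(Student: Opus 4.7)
The plan is to construct an improved ansatz $V^\eps := \tv^\eps + \eps \tv_1^\eps$ whose residual in \eqref{eq:nlssemi} is $\O(\eps^2)$ in $W$, and then to control the difference $\tu^\eps - V^\eps$ via Duhamel's formula and Gronwall's lemma in the Wiener algebra.

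Substituting $\tv^\eps = \sum_j a_j(\ttt) e^{i\phi_j/\eps}$ directly into \eqref{eq:nlssemi} and expanding $|\tv^\eps|^2 \tv^\eps$, one obtains
\begin{equation*}
i\eps\d_\ttt\tv^\eps + \eps^2\Delta\tv^\eps - \l\eps|\tv^\eps|^2\tv^\eps = i\eps\sum_j \dot a_j \,e^{i\phi_j/\eps} - \l\eps\sum_{k,\ell,m}a_k\bar a_\ell a_m\, e^{i(\phi_k-\phi_\ell+\phi_m)/\eps}.
\end{equation*}
Splitting the triple sum according to whether $(k,\ell,m)\in I_{k-\ell+m}$, the choice of amplitudes \eqref{eq:a} kills exactly the resonant contributions, leaving the residual
\begin{equation*}
r^\eps = -\l\eps\sum_{\text{non-res.}} a_k\bar a_\ell a_m\, e^{i((k-\ell+m)\cdot\tx - \ttt\,\omega_{k\ell m})/\eps},\qquad \omega_{k\ell m} := |k|^2-|\ell|^2+|m|^2,
\end{equation*}
where ``non-res.'' indicates $\omega_{k\ell m}\neq |k-\ell+m|^2$.

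The next step is to introduce a corrector $\tv_1^\eps$ carrying the same non-resonant phases, with amplitudes adjusted so that the $\O(\eps)$ contribution produced on $\eps\tv_1^\eps$ by the linear Schr\"odinger operator cancels $r^\eps$ mode by mode. A short computation forces
\begin{equation*}
b_{k\ell m}(\ttt) = \frac{\l\, a_k(\ttt)\bar a_\ell(\ttt) a_m(\ttt)}{\omega_{k\ell m} - |k-\ell+m|^2}.
\end{equation*}
The crucial point is that the denominator is a \emph{nonzero integer}, hence bounded below by $1$ in absolute value; this yields the uniform bound $\|\tv_1^\eps(\ttt,\cdot)\|_W \le \|a(\ttt)\|_{\ell^1}^3$. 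A further direct calculation then shows that $V^\eps = \tv^\eps + \eps\tv_1^\eps$ satisfies \eqref{eq:nlssemi} up to a residual $R^\eps$ of size $\O(\eps^2)$ in $W$, the new terms coming from $\eps^2\dot b_{k\ell m}$ (controlled by Lemma~\ref{lem:exista} applied with $n=1$) and from the cross-terms in the expansion of $|V^\eps|^2V^\eps - |\tv^\eps|^2\tv^\eps$.

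Finally, set $w^\eps := \tu^\eps - V^\eps$, so that $\|w^\eps(0)\|_W = \eps\|\tv_1^\eps(0,\cdot)\|_W = \O(\eps)$, and $w^\eps$ solves $i\eps\d_\ttt w^\eps + \eps^2\Delta w^\eps = \l\eps(|\tu^\eps|^2\tu^\eps - |V^\eps|^2V^\eps) - R^\eps$. Dividing by $\eps$, applying Duhamel and using that $e^{i\ttt\eps\Delta}$ is an isometry on $W$ and that $W$ is a Banach algebra gives, on any subinterval where $\|\tu^\eps\|_W$ stays bounded by some $M$,
\begin{equation*}
\|w^\eps(\ttt)\|_W \le C\eps + 3M^2\int_0^\ttt \|w^\eps(s)\|_W\,ds.
\end{equation*}
Gronwall yields $\|w^\eps(\ttt)\|_W \le C'\eps$ for $\ttt\in[0,T]$, whence $\|\tu^\eps-\tv^\eps\|_W\le C''\eps$. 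The main technical point is closing the bootstrap: one combines local well-posedness of \eqref{eq:nlssemi} in $W$ (standard Picard iteration, since $W$ is a Banach algebra) with the a priori bound $\|\tu^\eps(\ttt,\cdot)\|_W \le \|V^\eps(\ttt,\cdot)\|_W + C'\eps \le 2M$ provided by the Gronwall estimate, extending $\tu^\eps$ to the whole interval $[0,T]$ by a continuity argument, uniformly in $\eps$.
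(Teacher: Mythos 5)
Your proof is correct, but it packages the key estimate differently from the paper. The paper keeps the ansatz $\tv^\eps$ as is, writes the error equation for $\tw^\eps = \tu^\eps - \tv^\eps$, and bounds the Duhamel term $R^\eps(t)=\int_0^t e^{i\eps(t-s)\Delta}r^\eps(s)\,ds$ by a non-stationary phase argument: each non-resonant contribution contains an oscillatory factor $e^{i(|\kappa_{k\ell m}|^2-\omega_{k\ell m})s/\eps}$, and one integration by parts in $s$ --- using the arithmetic gap \eqref{eq:gap} in the denominator and the $C^1$ bound on $a(\cdot)$ from Lemma~\ref{lem:exista} for the antiderivative terms --- gains the missing factor $\eps$ directly, so no corrector and no bootstrap on $\|\tu^\eps\|_W$ beyond the standard Gronwall closing. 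You instead absorb the same gain into the ansatz by adding the explicit corrector $\eps\tv_1^\eps$ with amplitudes $b_{k\ell m} = \l\, a_k\bar a_\ell a_m/(\omega_{k\ell m}-|\kappa_{k\ell m}|^2)$; these $b_{k\ell m}$ are exactly the primitives produced by the paper's integration by parts, so the two arguments are computationally the same but yours is phrased as a higher-order WKB construction. What the corrector route buys is a residual that is $\O(\eps^2)$ rather than an integrated error that is $\O(\eps)$, which is conceptually cleaner and would scale to higher-order expansions; what it costs is the extra bookkeeping you correctly flag --- the nonzero initial datum $w^\eps(0)=-\eps\tv_1^\eps(0)$, the $\eps^2\dot b_{k\ell m}$ terms (covered by \eqref{eq:anal}), and the cubic cross-terms $|V^\eps|^2V^\eps - |\tv^\eps|^2\tv^\eps$. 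Both proofs rest on the same two pillars: the integer gap \eqref{eq:gap} and the analyticity estimate \eqref{eq:anal} of Lemma~\ref{lem:exista}.
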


\subsection{Link with normal forms}
\label{sec:normal}

Viewed as an infinite dimensional Hamiltonian system, \eqref{eq:nls}
can also be interpreted as the equation associated with the
Hamiltonian  
$$
H^\varepsilon(u,\bar u) = H_0 + \varepsilon P := \sum_{j \in \Z^d}
|j|^2 |u_j|^2 +  \varepsilon \frac{ \lambda}{2} \sum_{k + m = j +
  \ell} u_{k} u_m \bar u_\ell \bar u_j.  
$$
that is $i u_j = \partial_{\bar u_j} H(u,\bar u)$, see for instance
the presentations in \cite{Bam03,Greb07} and \cite{FG10}.  

In this setting, the Birkhoff normal form approach consists in searching a
transformation $\tau(u) = u + \mathcal{O}(\varepsilon u^3 )$ close to the
identity over bounded set in the Wiener algebra $W$, and such that in
the new variable $ v = \tau(u)$, the Hamiltonian 
$K^\varepsilon(v,\bar v) = H^\varepsilon(u,\bar u)$ takes the form
$K^\varepsilon = H_0 + \varepsilon Z + \varepsilon^2 R$ where $Z$ is
expected to be as simple as possible and $R =
\mathcal{O}(u^6)$. Searching $\tau$ as the time $t 
= \varepsilon$ flow of an unknown Hamiltonian $\chi$, we are led to
solving the {\em homological} equation  
$$
\{ H_0,\chi\} + Z = P,
$$
where $\{\, \cdot \, , \, \cdot \, \}$ is the Poisson bracket of the
underlying (complex) Hamiltonian structure. Now with unknown
Hamiltonians  
$\chi(u,\bar u) = \sum_{k,m,j,\ell} \chi_{k m \ell j} u_{k} u_m \bar
u_\ell \bar u_j$ and $Z(u,\bar u) = \sum_{k,m,j,\ell} Z_{km\ell j}
u_{k} u_m \bar u_\ell \bar u_j$,  
the previous relation can be written
$$
(|k|^2 + |m|^2 - |j|^2 - |\ell|^2)\chi_{k m \ell j} + Z_{k m \ell j} =
P_{k m \ell j},  
$$
where
$$
P_{km\ell j} = \left\{\begin{array}{rl}
1 &  \mbox{if} \quad k + m - j - \ell = 0,\\[2ex]
0 &   \mbox{otherwise.}
\end{array}\right.
$$
The solvability of this equation relies precisely on the resonant
relation $|k|^2 + |m|^2 = |j|^2 + |\ell|^2$: For non resonant indices, we
can solve for $\chi_{k m \ell j}$ and set $Z_{k m \ell j} = 0$, while
for resonant indices, we must take $Z_{km\ell j} = P_{km \ell
  j}$. Note that here there is no small divisors issues, as the
denominator is always  an integer ($0$ or greater than $1$).  

Hence we see that up to $\mathcal{O}(\epsilon^2)$ terms as long as the
solution remains bounded in $W$, the dynamics 
in the new variable will be close to the dynamics associated with the
Hamiltonian  
$$
K_1^\varepsilon(u,\bar u) = H_0 + \varepsilon Z := \sum_{j \in \Z^d}
|j|^2 |u_j|^2 +  \varepsilon \frac{\lambda}{2} \sum_{\substack{k + m =
    j + \ell \\ |k|^2 + |m|^2 = |j|^2 + |\ell|^2}} u_{k} u_m \bar
u_\ell \bar u_j.  
$$
At this point, let us observe that $H_0$ and $Z$ commute: $\{H_0,Z\} =
0$, and hence the dynamics of $K_1^\varepsilon$ is the simple
superposition of the dynamics of $H_0$ (the phase oscillation
\eqref{eq:phi}) to the dynamics of $\varepsilon Z$ (the resonant
system \eqref{eq:a}). Hence we easily calculate that
$v^\varepsilon(t,x)$ defined in \eqref{eq:rfn} is the exact solution
of the Hamiltonian $K_1^\varepsilon$. In other words, it is the
solution of the {\em first resonant normal form} of the system
\eqref{eq:nls}.  

The approximation result can then easily be proved using estimates  on
the remainder terms (that can be controlled in the Wiener
algebra, see \cite{FG10}),  in combination with Lemma
\ref{lem:exista} which ensures the stability of the solution of
$K_1^\varepsilon$ and a uniform bound in the Wiener algebra over a
time of order $1/\varepsilon$.

\section{An iterative approach}
\label{sec:interative}

We now turn to the analysis of the resonant system \eqref{eq:a}. 
The main remark for the forthcoming analysis is that new modes can be
generated by nonlinear interaction: we may have $a_j\not =0$ even
though $\alpha_j=0$. We shall view this phenomenon from a dynamical
point of view. As a first step, we recall the description of the sets
of resonant phases, established in \cite{Iturbulent} in the case $d=2$
(the argument remains the same for $d\ge 2$, see \cite{CDS10}):
\begin{lemma}\label{lem:completion}
  Let $j\in \Z^d$. Then, $(k,\ell,m) \in I_j$ 
precisely when the endpoints of the vectors $k, \ell, m,j$ form four
corners of a non-degenerate rectangle 
with $\ell$ and $j$ opposing each other, or when this
quadruplet corresponds to one of the two following degenerate cases:
$(k=j, m=\ell)$,  or $(k=\ell, m=j)$. 
\end{lemma}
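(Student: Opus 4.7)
The plan is to translate both conditions defining $I_j$ into a single piece of Euclidean geometry in $\R^d$, and then read off the two alternatives from the rigidity of a parallelogram with equal diagonals.

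First I would rewrite the conditions symmetrically. The linear constraint $j=k-\ell+m$ is equivalent to $k+m=j+\ell$, which says that the midpoint of the segment $[k,m]$ coincides with the midpoint of $[j,\ell]$. Geometrically, the four points $k,j,m,\ell$ then form (possibly degenerately) a parallelogram whose diagonals are precisely $[k,m]$ and $[j,\ell]$. This is exactly the configuration in which $j$ and $\ell$ sit at opposite corners, as required by the statement.

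Next I would exploit the quadratic constraint by choosing coordinates adapted to that parallelogram: write $c=\tfrac12(k+m)=\tfrac12(j+\ell)$ and set
\begin{equation*}
k=c+u,\quad m=c-u,\quad j=c+v,\quad \ell=c-v,\quad u,v\in\R^d.
\end{equation*}
A direct expansion gives $|k|^2+|m|^2=2|c|^2+2|u|^2$ and $|j|^2+|\ell|^2=2|c|^2+2|v|^2$, so the resonance condition $|k|^2+|m|^2=|j|^2+|\ell|^2$ collapses to $|u|=|v|$, i.e.\ the two diagonals of the parallelogram have the same length. A parallelogram whose diagonals are equal is a rectangle (this is elementary: the side lengths computed from adjacent vertices $k$ and $j$, namely $|u-v|$ and $|u+v|$, must satisfy $(u-v)\cdot(u+v)=|u|^2-|v|^2=0$, so adjacent sides are orthogonal).

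It remains to handle the degenerate case, where the rectangle collapses to a segment or a point. This happens exactly when $u$ and $v$ are collinear; combined with $|u|=|v|$ it forces $v=\pm u$. The choice $v=u$ gives $k=j$ and $m=\ell$, while $v=-u$ gives $k=\ell$ and $m=j$ (the case $u=v=0$ is contained in both). Conversely, either degenerate configuration trivially satisfies $k+m=j+\ell$ and $|k|^2+|m|^2=|j|^2+|\ell|^2$, and any non-degenerate rectangle with $j,\ell$ as opposite vertices does too. There is no real obstacle here; the only mild subtlety is to be careful that the parametrization $(c,u,v)$ produces genuinely integer vectors (which it does since all relations used are $\Z$-linear), and to check that the two families of degenerate solutions exhaust every possibility with $u\parallel v$.
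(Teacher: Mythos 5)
Your argument is correct, and it is the standard proof: rewrite $j=k-\ell+m$ as $k+m=j+\ell$ (common midpoint, hence a parallelogram with $j,\ell$ on one diagonal and $k,m$ on the other), then use the parallelogram law to turn $|k|^2+|m|^2=|j|^2+|\ell|^2$ into the statement that the diagonals have equal length, which characterizes rectangles; the collinear case $v=\pm u$ then exhausts the degenerations. The paper does not actually supply its own proof of this lemma --- it cites \cite{Iturbulent} for $d=2$ and \cite{CDS10} for general $d$ --- so there is no in-paper argument to compare against, but your derivation is exactly the one those sources use. One small inaccuracy worth fixing: the parametrization $c=\tfrac12(k+m)$, $u=\tfrac12(k-m)$, $v=\tfrac12(j-\ell)$ does \emph{not} in general produce integer vectors (they lie in $\tfrac12\Z^d$), and it does not need to --- the rectangle is a Euclidean configuration in $\R^d$, and the only role of $\Z^d$ is that the four vertices $k,\ell,m,j$ themselves are lattice points, which is given. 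So the parenthetical claim about $\Z$-linearity forcing $c,u,v\in\Z^d$ should simply be dropped; nothing in the proof relies on it.
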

  As a matter of fact, (the second part of) this lemma remains true in
  the one-dimensional case $d=1$. A specifity of that case, though, is
  that the associated transport equations show that no mode can
  actually be created \cite{CDS10}. The reason is that 
Lemma~\ref{lem:completion} implies that when
  $d=1$, \eqref{eq:a} takes the form $i\dot a_j =
  M_j a_j$ for some (smooth and real-valued) function $M_j$ whose
  exact value is unimportant: if $a_j(0)=0$, then $a_j(t)=0$ for all
  $t$. In the present paper, on the contrary, we examine precisely the
  appearance  of new modes.  
\smallbreak

Introduce the set of initial modes:
\begin{equation*}
  J_0=\{j\in \Z^d\mid \ \alpha_j\not =0\}. 
\end{equation*}
In view of \eqref{eq:a}, modes which appear after one iteration of
Lemma~\ref{lem:completion} are given by:
\begin{equation*}
  J_1=\{j\in \Z^d\setminus J_0 \mid \ \dot a_j(0)\not =0\}. 
\end{equation*}
One may also think of $J_1$ in terms of Picard iteration. Plugging the
initial modes (from $J_0$) into the nonlinear Duhamel's term and
passing to the limit $\eps\to 0$, $J_1$ corresponds to the new modes
resulting from this manipulation. 
More generally, modes appearing after $k$ iterations exactly are
characterized by: 
\begin{equation*}
  J_k=\left\{j\in \Z^d\setminus \bigcup_{\ell=0}^{j-1}J_\ell \mid \
  \frac{\dd^k}{\dd t^k} a_j(0)\not =0\right\}.  
\end{equation*}

\section{A particular dynamical system}
\label{sec:dyn}

We consider the initial datum 
\begin{equation}
  \label{eq:bonneci}
  u_0(x) = 1+2\cos x_1+2\cos x_2=1 + e^{ix_1}+e^{-ix_1}+e^{ix_2}+e^{-ix_2}.
\end{equation}
The corresponding set of initial modes is given by
\begin{equation*}
  J_0=\left\{ (0,0), (1,0),
    (-1,0),(0,1),(0,-1)\right\}\times\{0_{\Z^{d-2}}\}.  
\end{equation*}
It is represented on the following figure:
\setlength{\unitlength}{1cm}
\begin{center}
\begin{picture}(4,4)
\linethickness{0.1mm}
\put(0.3,1){\line(1,0){3.4}}
\put(0.3,0.5){\line(1,0){3.4}}
\put(0.3,1.5){\line(1,0){3.4}}
\put(0.3,2.5){\line(1,0){3.4}}
\put(0.3,3.5){\line(1,0){3.4}}
\put(0.3,3){\line(1,0){3.4}}
\put(0.5,0.3){\line(0,1){3.4}}
\put(1,0.3){\line(0,1){3.4}}
\put(1.5,0.3){\line(0,1){3.4}}
\put(2.5,0.3){\line(0,1){3.4}}
\put(3,0.3){\line(0,1){3.4}}
\put(3.5,0.3){\line(0,1){3.4}}
\linethickness{0.4mm}
\put(0,2){\vector(1,0){4}}
\put(2,0){\vector(0,1){4}}
\put(2,2){\circle*{.25}}
\put(2,1.5){\circle*{.25}}
\put(2.5,2){\circle*{.25}}
\put(2,2.5){\circle*{.25}}
\put(1.5,2){\circle*{.25}}
\end{picture}
\end{center}
In view of Lemma~\ref{lem:completion}, the generation of modes affects
only the first two coordinates: the dynamical system that we study is
two-dimensional, and we choose to drop out the last $d-2$ coordinates
in the sequel, implicitly equal to $0_{\Z^{d-2}}$.
After one iteration of Lemma~\ref{lem:completion}, four points appear:
\begin{equation*}
  J_1=\{(1,1),(1,-1),(-1,-1),(-1,1)\},
\end{equation*}
as plotted below. 
\begin{center}
\begin{picture}(4,4)
\linethickness{0.1mm}
\put(0.3,1){\line(1,0){3.4}}
\put(0.3,0.5){\line(1,0){3.4}}
\put(0.3,1.5){\line(1,0){3.4}}
\put(0.3,2.5){\line(1,0){3.4}}
\put(0.3,3.5){\line(1,0){3.4}}
\put(0.3,3){\line(1,0){3.4}}
\put(0.5,0.3){\line(0,1){3.4}}
\put(1,0.3){\line(0,1){3.4}}
\put(1.5,0.3){\line(0,1){3.4}}
\put(2.5,0.3){\line(0,1){3.4}}
\put(3,0.3){\line(0,1){3.4}}
\put(3.5,0.3){\line(0,1){3.4}}
\linethickness{0.4mm}
\put(0,2){\vector(1,0){4}}
\put(2,0){\vector(0,1){4}}
\put(2,2){\circle*{.25}}
\put(2,1.5){\circle*{.25}}
\put(2.5,2){\circle*{.25}}
\put(2,2.5){\circle*{.25}}
\put(1.5,2){\circle*{.25}}
\color{blue}
\put(1.5,1.5){\circle*{.25}}
\put(2.5,1.5){\circle*{.25}}
\put(1.5,2.5){\circle*{.25}}
\put(2.5,2.5){\circle*{.25}}
\end{picture}
\end{center}
The next two steps are described geometrically: 
\begin{center}
\begin{picture}(4,4)
\linethickness{0.1mm}
\put(0.3,1){\line(1,0){3.4}}
\put(0.3,0.5){\line(1,0){3.4}}
\put(0.3,1.5){\line(1,0){3.4}}
\put(0.3,2.5){\line(1,0){3.4}}
\put(0.3,3.5){\line(1,0){3.4}}
\put(0.3,3){\line(1,0){3.4}}
\put(0.5,0.3){\line(0,1){3.4}}
\put(1,0.3){\line(0,1){3.4}}
\put(1.5,0.3){\line(0,1){3.4}}
\put(2.5,0.3){\line(0,1){3.4}}
\put(3,0.3){\line(0,1){3.4}}
\put(3.5,0.3){\line(0,1){3.4}}
\linethickness{0.4mm}
\put(0,2){\vector(1,0){4}}
\put(2,0){\vector(0,1){4}}
\put(2,2){\circle*{.25}}
\put(2,1.5){\circle*{.25}}
\put(2.5,2){\circle*{.25}}
\put(2,2.5){\circle*{.25}}
\put(1.5,2){\circle*{.25}}
\color{blue}
\put(1.5,1.5){\circle*{.25}}
\put(2.5,1.5){\circle*{.25}}
\put(1.5,2.5){\circle*{.25}}
\put(2.5,2.5){\circle*{.25}}
\color{red}
\put(2,1){\circle*{.25}}
\put(2,3){\circle*{.25}}
\put(1,2){\circle*{.25}}
\put(3,2){\circle*{.25}}
\end{picture}
\hspace{1cm}
\begin{picture}(4,4)
\linethickness{0.1mm}
\put(0.3,1){\line(1,0){3.4}}
\put(0.3,0.5){\line(1,0){3.4}}
\put(0.3,1.5){\line(1,0){3.4}}
\put(0.3,2.5){\line(1,0){3.4}}
\put(0.3,3.5){\line(1,0){3.4}}
\put(0.3,3){\line(1,0){3.4}}
\put(0.5,0.3){\line(0,1){3.4}}
\put(1,0.3){\line(0,1){3.4}}
\put(1.5,0.3){\line(0,1){3.4}}
\put(2.5,0.3){\line(0,1){3.4}}
\put(3,0.3){\line(0,1){3.4}}
\put(3.5,0.3){\line(0,1){3.4}}
\linethickness{0.4mm}
\put(0,2){\vector(1,0){4}}
\put(2,0){\vector(0,1){4}}
\put(2,2){\circle*{.25}}
\put(2,1.5){\circle*{.25}}
\put(2.5,2){\circle*{.25}}
\put(2,2.5){\circle*{.25}}
\put(1.5,2){\circle*{.25}}
\color{blue}
\put(1.5,1.5){\circle*{.25}}
\put(2.5,1.5){\circle*{.25}}
\put(1.5,2.5){\circle*{.25}}
\put(2.5,2.5){\circle*{.25}}
\color{red}
\put(2,1){\circle*{.25}}
\put(2,3){\circle*{.25}}
\put(1,2){\circle*{.25}}
\put(3,2){\circle*{.25}}
\color{green}
\put(1,1){\circle*{.25}}
\put(1,1.5){\circle*{.25}}
\put(1,2.5){\circle*{.25}}
\put(1,3){\circle*{.25}}
\put(1.5,1){\circle*{.25}}
\put(1.5,3){\circle*{.25}}
\put(3,3){\circle*{.25}}
\put(3,2.5){\circle*{.25}}
\put(3,1.5){\circle*{.25}}
\put(3,1){\circle*{.25}}
\put(2.5,1){\circle*{.25}}
\put(2.5,3){\circle*{.25}}
\end{picture}
\end{center}
As suggested by these illustrations, we can prove by induction:
\begin{lemma}
Let $p\in \N$. 
\begin{itemize}
\item The set of relevant modes after $2p$ iterations is the square
  of length $2^{p}$ whose diagonals are parallel to the axes:
  \begin{equation*}
   \Nc^{(2p)}:= \bigcup_{\ell=0}^{2p}J_\ell = \left\{(j_1,j_2)\mid
      |j_1|+|j_2|\le 2^p\right\}. 
  \end{equation*}
\item The set of relevant modes after $2p+1$ iterations is the square
  of length $2^{p+1}$ whose sides are parallel to the axes:
  \begin{equation*}
   \Nc^{(2p+1)} := \bigcup_{\ell=0}^{2p+1}J_\ell = \left\{(j_1,j_2)\mid
      \max(|j_1|,|j_2|)\le 2^p\right\}. 
  \end{equation*}
\end{itemize}
 \end{lemma}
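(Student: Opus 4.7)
The plan is to proceed by induction on $p$, handling the two identities $\Nc^{(2p+1)} = \{\max(|j_1|,|j_2|) \le 2^p\}$ and $\Nc^{(2p+2)} = \{|j_1|+|j_2| \le 2^{p+1}\}$ in succession. The base case $p = 0$ has just been sketched: $\Nc^{(0)} = J_0$ by definition, and one application of Lemma~\ref{lem:completion} together with the positivity of the initial data yields $J_1 = \{(\pm 1, \pm 1)\}$, so $\Nc^{(1)} = \{\max(|j_1|, |j_2|) \le 1\}$. Each inductive step will be proved by a double inclusion; for brevity set $D_p = \{|j_1|+|j_2| \le 2^p\}$ and $S_p = \{\max(|j_1|,|j_2|) \le 2^p\}$.

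For the ``$\supseteq$'' direction I would exhibit, for every new mode $j$, an explicit rectangle of $I_j$ whose three other vertices lie in the previous level set, and verify that its contribution to the relevant derivative does not cancel (thanks to the positivity of the $\alpha_j$ and the fourfold dihedral symmetry of the configuration, all such contributions carry the same imaginary phase). For $j \in S_p \setminus D_p$, the axis-aligned rectangle with $\ell = (0, 0)$, $k = (j_1, 0)$, $m = (0, j_2)$ does the job, since $|j_1|, |j_2| \le 2^p$ places $k, m$ in $D_p$. For $j \in D_{p+1} \setminus S_p$ (WLOG $j_1 > 2^p \ge |j_2|$ with $j_1 + |j_2| \le 2^{p+1}$), I use the tilted rectangle $\ell = (0, 0)$, $k = \tfrac12(j_1 + j_2, j_2 - j_1)$, $m = \tfrac12(j_1 - j_2, j_1 + j_2)$ when $j_1 \equiv j_2 \pmod 2$: these satisfy $k \cdot m = 0$ and $\|k\|_\infty, \|m\|_\infty \le \tfrac12(j_1 + |j_2|) \le 2^p$, so $k, m \in S_p$. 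The opposite parity case is handled by shifting $\ell$ by one lattice unit along an axis and adjusting $k, m$ by the same shift.

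I expect the main obstacle to be the ``$\subseteq$'' direction. The identities $j = k - \ell + m$ and $|j|^2 = |k|^2 - |\ell|^2 + |m|^2$ yield immediately the $\ell^2$-estimate $|j|^2 \le 2 \cdot 4^p$ when $k, \ell, m \in D_p$ (respectively $|j|^2 \le 4^{p+1}$ when $k, \ell, m \in S_p$), which integrally forces $j$ into the target set for $p \in \{0, 1\}$. For $p \ge 2$, however, this bound is strictly too weak: for instance $(2^p + 1, 0)$ has $|j|^2 = (2^p+1)^2 \le 2 \cdot 4^p$ yet lies outside $S_p$, so it must be ruled out by finer means. My plan to close this gap is to couple the squared-norm identity with the coordinate-sum identity $k + m = j + \ell$ and exploit the sparsity of the set of squared norms attained on $D_p$ (respectively $S_p$): for a forbidden target $j$, the equation $|k|^2 + |m|^2 = |j|^2 + |\ell|^2$ restricts $(|k|^2, |m|^2, |\ell|^2)$ to a very short list of admissible triples, each of which is then eliminated by a coordinate-wise check that $k + m = j + \ell$ admits no solution with $k, \ell, m$ in the prescribed set. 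The fourfold dihedral symmetry of $D_p$ and $S_p$ confines the verification to $j$ in a single octant, and I expect an inductive bootstrap to keep the resulting case analysis finite and uniform in $p$.
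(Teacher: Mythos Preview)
The paper does not actually supply a proof of this lemma beyond the phrase ``we can prove by induction,'' so there is nothing to compare against directly; what follows concerns the soundness of your proposal.

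The principal gap is your non-cancellation claim for the ``$\supseteq$'' direction. Contributions do \emph{not} all carry the same phase: the conjugated factor $\bar a_\ell$ reverses the sign, so a triple with the ``new'' vertex sitting in the $\ell$-slot contributes with opposite sign from one where it sits in the $k$- or $m$-slot. Worse, the lemma itself appears to be false under the paper's derivative-based definition of $J_k$. Take $j=(2,0)$, which the lemma places in $\Nc^{(2)}$. The only triples $(k,\ell,m)\in I_j$ with $k,\ell,m\in\Nc^{(1)}$ are $((1,\pm1),(0,0),(1,\mp1))$; here $k,m\in J_1$, so $a_k\bar a_\ell a_m=O(t^2)$, whence $a_j=O(t^3)$ and $\ddot a_j(0)=0$, i.e.\ $(2,0)\notin J_2$. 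This is perfectly consistent with the computation $\alpha(2)=2^2-1=3$ in \S\ref{sec:proof}, but it contradicts the derivative reading of the present lemma. The statement is only correct if $J_k$ is taken to mean the $k$-th Picard/geometric iterate (new modes reachable by one rectangle-completion from $\Nc^{(k-1)}$); under that reading your explicit rectangles suffice and non-cancellation is moot. Note also that your odd-parity fix is wrong as written: translating $k,\ell,m$ all by the same $v$ sends $j=k-\ell+m$ to $j+v$, not to $j$. A correct repair is to choose $\ell$ with $\ell_1+\ell_2\equiv j_1+j_2\pmod 2$ (e.g.\ $\ell=(1,0)$) and place $k,m$ on the perpendicular diameter of the circle through $j$ and $\ell$.

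For the ``$\subseteq$'' direction your instinct that the $\ell^2$ bound is insufficient is correct, and the difficulty is real: the \emph{continuous} analogue is false (three vertices of a rectangle in $[-K,K]^2$ can force the fourth to have $\ell^1$-norm as large as $K(1+\sqrt2)$, e.g.\ with $k=(K,K)$, $m=(K,-K)$), so both integrality and the specific form $K=2^p$ are essential --- for instance one needs that $4^p$ has only the trivial representations as a sum of two integer squares. Your enumeration plan points in the right direction but, as written, does not yet contain a mechanism that keeps the case list uniformly bounded in $p$.
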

After an infinite number of iterations, the whole lattice $\Z^2$ is
generated:
\begin{equation*}
  \bigcup_{k\ge 0}\Nc^{(k)}=\Z^2 \times \{0_{\Z^{d-2}}\}. 
\end{equation*}
Among these sets, our interest will focus on \emph{extremal modes}:
for $p\in \N$,
\begin{align*}
 \Nc_*^{(2p)} &:= \left\{(j_1,j_2)\in \{(0,\pm 2^p),(\pm
    2^p,0)\}\right\},\\
  \Nc_*^{(2p+1)}&:= \left\{(j_1,j_2)\in \{(\pm 2^p,\pm 2^p),(\mp
    2^p,\pm 2^p)\}\right\}.
\end{align*}
These sets correspond to the edges of the squares obtained successively
by iteration of Lemma~\ref{lem:completion} on $J_0$. 
The set $\Nc_*$
defined in Theorem~\ref{theo:main} corresponds to 
\begin{equation*}
  \Nc_* = \bigcup_{k\ge 0}\Nc_*^{(k)}. 
\end{equation*}
The important property associated to these extremal points is that
they are generated in a unique fashion:
\begin{lemma}\label{lem:creation}
  Let $n\ge 1$, and $j\in \Nc_*^{(n)}$. There exists a unique pair
  $(k,m)\in \Nc^{(n-1)}\times \Nc^{(n-1)}$ such that $j$ is generated
  by the interaction of the modes $0$, $k$ and $m$, up to the permutation of
  $k$ and $m$. More precisely, $k$
  and $m$ are extremal points generated at the previous step:
  $k,m\in\Nc^{(n-1)}_*$.  
\end{lemma}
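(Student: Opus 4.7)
The plan is to translate the generation condition into a purely geometric problem and then enumerate. By Lemma~\ref{lem:completion}, the condition $(k, 0, m) \in I_j$ (with $\ell = 0$) means either that $\{0, k, j, m\}$ form the vertices of a non-degenerate rectangle with $0$ opposite $j$, or that one of $k, m$ equals $j$. Since $j \in \Nc_*^{(n)}$ is disjoint from $\Nc^{(n-1)}$, the degenerate possibilities are incompatible with the hypothesis $k, m \in \Nc^{(n-1)}$. The rectangle condition then simplifies to
\begin{equation*}
k + m = j, \qquad k \cdot m = 0,
\end{equation*}
so the question becomes: how many ways can one decompose an extremal $j$ into two orthogonal lattice vectors, each lying in $\Nc^{(n-1)}$? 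Since $J_0$ is invariant under the dihedral action of the square (reflections in the two axes and in the two diagonals), so are all the $\Nc^{(n)}$ and $\Nc_*^{(n)}$; it therefore suffices to treat one representative $j$ in each parity class.

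I would handle the even case $n = 2p$ first, taking $j = (2^p, 0)$ and writing $k = (a, b)$, $m = (2^p - a, -b)$. The explicit $\ell^\infty$ description $\Nc^{(2p-1)} = \{\max(|j_1|, |j_2|) \le 2^{p-1}\}$, applied simultaneously to $k$ and $m$, forces $a = 2^{p-1}$ (since both $|a|$ and $|2^p - a|$ must be bounded by $2^{p-1}$). The orthogonality $k \cdot m = 0$ then yields $b^2 = 2^{2p-2}$, i.e.\ $b = \pm 2^{p-1}$, producing the single pair
\begin{equation*}
\{k, m\} = \{(2^{p-1}, 2^{p-1}), (2^{p-1}, -2^{p-1})\} \subset \Nc_*^{(2p-1)},
\end{equation*}
up to exchange. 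For the odd case $n = 2p+1$, I would take $j = (2^p, 2^p)$ and use instead the $\ell^1$ (diamond) description $\Nc^{(2p)} = \{|j_1| + |j_2| \le 2^p\}$. Writing $k = (a, b)$, the diamond constraint on $m = j - k$ first rules out negative components of $k$ (a single negative coordinate already makes $|2^p - a| + |2^p - b| > 2^p$), after which adding the two diamond inequalities collapses them to the equality $a + b = 2^p$. Orthogonality then reads $a^2 + b^2 = 2^p(a+b) = 2^{2p} = (a+b)^2$, which forces $ab = 0$; the unique pair (up to swap) is $\{k, m\} = \{(0, 2^p), (2^p, 0)\} \subset \Nc_*^{(2p)}$.

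I do not anticipate a serious obstacle, as the geometric content is essentially a short convex-geometry exercise on $\Z^2$. The only point requiring care is the parity distinction: the shape of $\Nc^{(n-1)}$ alternates between an axis-aligned square ($\ell^\infty$ ball) and a diamond ($\ell^1$ ball), and the extremal position of $j$ relative to the \emph{dual} of this shape is exactly what pins down both coordinates of $k$ and $m$ and forces them to lie on the previous extremal set $\Nc_*^{(n-1)}$. This dichotomy is also what justifies the cascade structure used later on—each extremal mode is fed by exactly one pair of previous extremal modes, which is the backbone of the quantitative estimates in the remainder of the paper.
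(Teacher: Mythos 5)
Your proof is correct, and it supplies a complete, rigorous argument for a lemma that the paper states without proof (evidently regarding it as geometrically clear from the figures and the explicit $\ell^1/\ell^\infty$ descriptions of $\Nc^{(2p)}$ and $\Nc^{(2p+1)}$). The reduction via Lemma~\ref{lem:completion} to the rectangle condition $k+m=j$, $k\cdot m = 0$, followed by the parity-alternating shape of $\Nc^{(n-1)}$, pins down $k$ and $m$ exactly as you describe, and the reduction to one representative per parity class is legitimate because $J_0$, and hence each $\Nc^{(n)}$ and $\Nc_*^{(n)}$, is invariant under the dihedral symmetry group of the square.

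One small step is implicit and worth stating: you assume from the outset that $0$ sits in the conjugated slot $\ell$ of the resonant triple. The other assignments --- where $0$ is in a non-conjugated slot and one of $k,m$ plays the role of $\ell$ --- must be ruled out, and they are, for a quick reason: the resonance relation $|j|^2 = |k|^2 - |\ell|^2 + |m|^2$ would then read $\max(|k|^2,|m|^2) = |j|^2 + \min(|k|^2,|m|^2) \ge 2^n$, whereas $\sup\{|k|^2 : k \in \Nc^{(n-1)}\} = 2^{n-1} < 2^n$ in both parity cases. With that observation inserted, the argument is airtight and also shows, as a byproduct, why ``the interaction of the modes $0$, $k$ and $m$'' must be read with $0$ in the $\ell$ slot, which is consistent with the ``up to permutation of $k$ and $m$'' in the statement.
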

Note however that points in $\Nc_*^{(n)}$ are generated in a
non-unique fashion by the interaction of modes in $\Z^d$. For
instance, $(1,1)\in J_1$ is generated after one step only by the
interaction of $(0,0)$, $(1,0)$ and $(0,1)$. On the other hand, we see
that after two iterations, $(1,1)$ is fed also by the interaction of
the other three points in $\Nc_*^{(1)}$, $(-1,1)$, $(-1,-1)$ and
$(1,-1)$. After three iterations, there are even more three waves
interactions affecting $(1,1)$.

\begin{remark}
According to 
the numerical experiment performed in the last section, it seems that
{\em all} modes --- and not only the extremal ones ---  receive some energy in
the time interval  
$[0,T/\eps]$. However the dynamics for the other modes
is much more complicated to understand, as non extremal points of
$\Nc^{(n+1)}$ are in general generated by several triplets of points in
$\Nc^{(n)}$.  
\end{remark}

\section{Proof of Theorem~\ref{theo:main}}
\label{sec:proof}

Since the first part of Theorem~\ref{theo:main} has been established
at the end of \S\ref{sec:og}, we now focus our attention on the estimates
announced in  Theorem~\ref{theo:main}. 

In view of the geometric analysis of the previous section, we will show that can
compute the first non-zero term in the Taylor expansion of solution
$a_m(t)$ of \eqref{eq:a} at 
$t=0$, for $m\in \Nc_*$. Let $n\ge 1$ and $j\in \Nc_*^{(n)}$. 
Note that since we have considered initial coefficients which are all
equal to one --- see 
\eqref{eq:bonneci} --- and because of the symmetry in \eqref{eq:a},
the coefficients $a_j(t)$ do not depend on $j \in \Nc_*^{(n)}$ but
only on $n$.

Hence we have
\begin{equation*}
  a_j(t) =
  \frac{t^{\alpha(n)}}{\alpha(n)!}\frac{\dd^{\alpha(n)}a_j}{\dd t^{\alpha(n)}}(0)
  + 
  \frac{t^{\alpha(n)+1}}{\alpha(n)!}\int_0^1 (1-\theta)^{\alpha(n)}
  \frac{\dd^{\alpha(n)+1}a_j}{\dd t^{\alpha(n)+1}}(\theta t)d\theta, 
\end{equation*}
for some $\alpha(n)\in \N$ still to be determined.

First, Eqn.~\eqref{eq:anal} in 
Lemma~\ref{lem:exista} ensures that there exists $C_0>0$
\emph{independent of $j$ and $n$} such that 
\begin{equation*}
  r_j(t) = \frac{t^{\alpha(n)+1}}{\alpha(n)!}\int_0^1 (1-\theta)^{\alpha(n)}
  \frac{d^{\alpha(n)+1}a_j}{dt^{\alpha(n)+1}}(\theta t)d\theta
\end{equation*}
satisfies
\begin{equation}\label{eq:rj}
  |r_j(t)|\le (C_0t)^{\alpha(n)+1}.
\end{equation}
Next, we write
\begin{equation}\label{eq:aj-approx}
  a_j(t) = c(n) t^{\alpha(n)} +r_j(t),
\end{equation}
and we determine $c(n)$ and $\alpha(n)$ thanks to the iterative approach
analyzed in the previous paragraph.  In view of
Lemma~\ref{lem:creation}, we have
\begin{equation*}
  i\dot a_j = 2\l c(n-1)^2 t^{2\alpha(n-1)} +
  \O\(t^{2\alpha(n-1)+1}\), 
\end{equation*}
where the factor $2$ accounts for the fact that the vectors $k$ and
$m$ can be exchanged in Lemma~\ref{lem:creation}. We infer the
relations:
\begin{align*}
  \alpha(n) &= 2\alpha(n-1)+1 \quad ;\quad \alpha(0)=0.\\
c(n)&= -2i\l\frac{c(n-1)^2}{2\alpha(n-1)+1}\quad ;\quad c(0)=1. 
\end{align*}
We first derive
\begin{equation*}
  \alpha(n) = 2^n-1.
\end{equation*}
We can then compute, $c(1)=-2i\l$, and for $n\ge 1$:
\begin{equation*}
  c(n+1)= i\frac{(2\l)^{\sum_{k=0}^n
    2^k}}{\prod_{k=1}^{n+1}\(2^k-1\)^{2^{n+1-k}}}=
i\frac{(2\l)^{2^{n+1}-1}}{\prod_{k=1}^{n+1}\(2^k-1\)^{2^{n+1-k}}} .  
\end{equation*}
We can then infer the first estimate of Theorem~\ref{theo:main}: by
Proposition~\ref{prop:approxOG}, there exists $C$ independent of $j$
and $\eps$ such that for $0<\eps\le \eps_0(T)$, 
\begin{equation*}
  \left\lvert u_j(t)-a_j(\eps t)\right\rvert \le C\eps,\quad 0\le t\le
  \frac{T}{\eps}.
\end{equation*}
We notice that since for $j\in \Nc_*^{(n)}$, $|j| = 2^{n/2}$, regardless of the
parity of $n$, we have $\alpha(n) = |j|^2 - 1$. 
For $j\in \Nc_*$, we then use \eqref{eq:aj-approx} and \eqref{eq:rj},
and the estimate follows, with $c_j=c(n)$. 
\smallbreak

To prove the last estimate of Theorem~\ref{theo:main}, we must examine
more closely the behavior of $c(n)$. 
Since $2^k-1\le 2^k$ for $k\ge 1$, we have the estimate
\begin{equation*}
  |c(n+1)|\ge \frac{2^{2^{n+1}-1}}{2^{\sum_{k=1}^{n+1}k2^{n+1-k}}}.
\end{equation*}
Introducing the function
\begin{equation*}
  f_{n+1}(x) = \sum_{k=1}^{n+1}x^k= \frac{1-x^{n+1}}{1-x}x,\quad x\not
  =1,
\end{equation*}
we have
\begin{equation*}
  \sum_{k=1}^{n+1}k2^{n+1-k} = 2^n f_{n+1}'\(\frac{1}{2}\) = 2^{n+2}-n-3,
\end{equation*}
and the (rough) bound
\begin{equation*}
  |c(n+1)| \ge 2^{2^{n+1}-1-2^{n+2}+n+3}= 2^{-2^{n+1}+n+2}\ge
  2^{-2^{n+1}}. 
\end{equation*}
We can now gather all the estimates together: 
\begin{align}
\label{eq:final}
  |u_j(t)|&\ge \left\lvert c(n) \(\eps t\)^{\alpha(n)}\right\rvert -
  \(C_0\eps t\)^{\alpha(n)+1} - C\eps \nonumber\\
&\ge \frac{1}{2}\(\frac{\eps
    t}{2}\)^{2^n-1} -  \(C_0\eps t\)^{2^n}- C\eps \nonumber\\
&\ge \frac{1}{2}\(\frac{\eps
    t}{2}\)^{2^n-1}\(1 - (2 C_0)^{2^n}\eps t\)-C\eps. 
\end{align}
To conclude, we simply consider $t$ such that
\begin{equation}
\label{eq:time}
  \(\frac{\eps
    t}{2}\)^{2^n-1}= \eps^\g,\text{ that is }t =
  \frac{2}{\eps^{1-\g/\alpha(n)}}. 
\end{equation}
Hence for the time $t$
given in \eqref{eq:time}, since $\alpha(n) = |j|^2 - 1$, we have  
\begin{align*}
(2 C_0)^{2^n}\eps t
  &= \(2 C_0\)^{|j|^2} \eps^{\g/(|j|^2 - 1)}\\
    &= \exp \(|j|^2 \log (2C_0)-\frac{\g}{|j|^2 - 1}\log
  \(\frac{1}{\eps}\)\).
  \end{align*}
Assuming the spectral
localization
\begin{equation*}
  |j|\le \alpha \(\log \frac{1}{\eps}\)^\theta,
\end{equation*}
we get for $\eps$ small enough
\begin{equation*}
  (2 C_0)^{2^n}\eps t \le \exp\(\alpha^2 \(\log
  \frac{1}{\eps}\)^{2\theta}\log (2C_0) -  
\frac{\g}{\alpha^2} \(\log\frac{1}{\eps}\)^{1 - 2\theta}\). 
\end{equation*}
The argument of the exponential goes to $-\infty$ as $\eps \to 0$
provided that 
\begin{equation*}
 \g > 0 \quad \mbox{and}\quad  \theta<\frac{1}{4},  
\end{equation*}
in which case we have $1 - (2 C_0)^{2^n}\eps t > 3/4$ for $\eps$
sufficiently small. Inequality~\eqref{eq:final} then yields the result,
owing to the fact that  $C\eps$ is negligible compared to $\eps^\g$
when $0\le \g< 1$. 
\smallbreak

Finally, we note that the choice \eqref{eq:time} is consistent with
$\eps t\in [0,T]$ for $\eps\le \eps_0$, for some $\eps_0>0$ uniform
in $j$ satisfying the above spectral localization, since
\begin{equation*}
  \eps^{\g/\alpha(n)}=e^{-\frac{\g}{\alpha(n)}\log\frac{1}{\eps}}\le
  \exp\( -\frac{\g}{\alpha^2}\(\log\frac{1}{\eps}\)^{1-2\theta}\)\Tend
  \eps 0 0 .
\end{equation*}


\section{Numerical illustration}

We consider the equation \eqref{eq:nls0} in the defocusing case
($\lambda = 1$) on the two-dimensional torus, $d=2$. We take $u(0,x) = \delta
( 1 + 2 \cos( x_1) + 2 \cos(x_2))$ with $\delta = 0.0158$. With the
previous notations, this corresponds to $1/\eps = \delta^{-2} \simeq
4.10^{3}$.  In Figure ~\ref{fig:1},
we plot the evolution of the logarithms of the Fourier modes $\log
|u_j(t)|$ for $ j = (0,n)$, with $n = 0,\ldots, 15$. We observe the
energy exchanges between the modes. Note that all the modes (and not
only the extremal modes in the set $\Nc_*$) gain some energy, but that
after some time there is a stabilization effect (all the modes are
turned on) and the energy exchanges are less significant.  

\begin{figure}[ht]
\begin{center}
\rotatebox{0}{\resizebox{!}{0.4\linewidth}{%
   \includegraphics{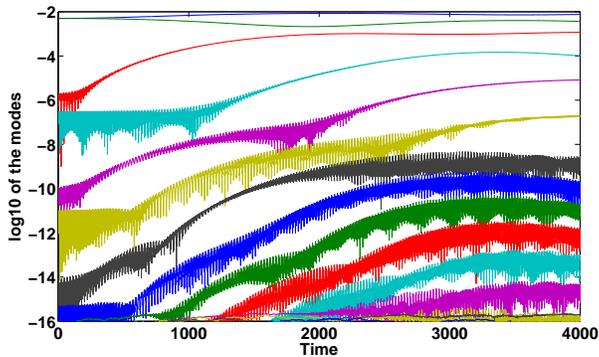}}}
\end{center}
\caption{Evolution of the Fourier modes of the resonant solution in
  logarithmic scale: $u_0(x) = 1 + 2 \cos( x_1) + 2 \cos(x_2)$ } 
\label{fig:1}
\end{figure}

In contrast, we plot in Figure~\ref{fig:2} the solution corresponding to the
initial value $u(0,x) = \delta (2 \cos( x_1) + 2 \cos(x_2))$ with the
same $\delta$. In this situation, no energy exchanges are observed
after a relatively long time. Note that in this case, the initial data
is made of the $4$ modes $\{j \in \Z^2\, | |j| = 1\,\}$ forming a
square in $\Z^2$. This set is closed for the resonance relation, so  no
energy exchange is expected in the time scale
$\mathcal{O}(1/\eps)$.  We notice that 
the solution of \eqref{eq:a} is given explicitly by $a_j(t) = 0$ for
 $|j| \neq 1$, and $a_j(t) = \exp(9 i t)$ for $|j| = 1$. 

\begin{figure}[ht]
\begin{center}
\rotatebox{0}{\resizebox{!}{0.4\linewidth}{%
   \includegraphics{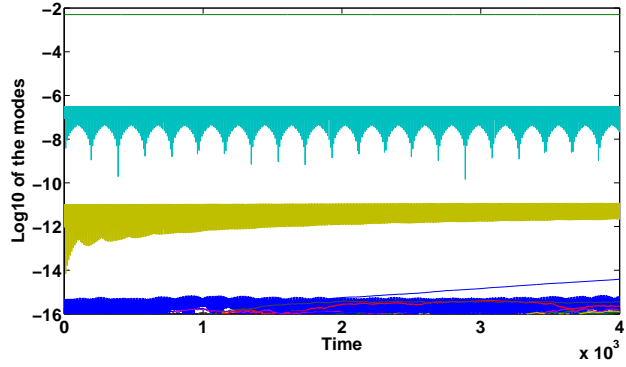}}}
\end{center}
\caption{Evolution of the Fourier modes of the nonresonant solution in
  logarithmic scale: $u_0(x) = 2 \cos( x_1) + 2 \cos(x_2)$} 
\label{fig:2}
\end{figure}

The numerical scheme is a splitting time integrator based on the
decomposition between the Laplace operator and the nonlinearity in
combination with a Fourier pseudo-spectral collocation method (see for
instance \cite{Lub08} and \cite[Chap IV]{FaouBook} for convergence
results in the case of \eqref{eq:nls0}). While the Laplace operator
part $i \partial_t u = - \Delta u$ can be integrated exactly in
Fourier, the solution of the nonlinear part  $i \partial_t u = |u|^2
u$ starting in $v(x)$ is given explicitly by the formula $u(t,x) =
\exp(-i t |v(x)|^2) v(x)$. The fast Fourier transform algorithm allows
an easy implementation of the algorithm. The stepsize used is $\tau =
0.001$ and a $128 \times 128$ grid is used.

Note that using the framework of  \cite{FG11,FaouBook}, we can prove
that the numerical solution can be interpreted as the exact solution
of a modified Hamiltonian of the form  
$$
\sum_{j \in B_K}
|j|^2 |u_j|^2 +  \frac{ \lambda}{2} \sum_{\substack{(k,m,j,\ell) \in
    B_K \\ k + m - j - 
  \ell \in K \Z^2}} \frac{i \tau \omega_{km \ell j}}{\exp(i \tau
\omega_{km\ell j}) - 1}u_{k} u_m \bar u_\ell \bar u_j +
\mathcal{O}(\tau), 
$$
where $\omega_{km \ell j} = |k|^2 + |m|^2 - |\ell|^2 - |m|^2$ and
$B_K$ the grid of frequencies $j = (j_1,j_2)$ such that $j_1$ and
$j_2$ are less than $K/2 = 64$. Note that this energy is well defined
as $\tau \omega_{km\ell j}$ is never a multiple of $2\pi$, and that
the frequencies of the linear operator of this modified energy carry
on the same resonance relations (at least for relatively low
modes). This partly explains why the cascade effect due to the
resonant system should be correctly reproduced by the numerical
simulations. 
\appendix

\section{Sketch of the proof of Proposition~\ref{prop:approxOG}}
\label{sec:wnlgo}

By construction, the approximate solution $\tv^\eps$ solves
\begin{equation*}
  i\eps \d_t \tv^\eps +\eps^2 \Delta \tv^\eps =\l \eps
  |\tv^\eps|^2\tv^\eps +\l\eps r^\eps,
\end{equation*}
where the source term $r^\eps$ correspond to non-resonant interaction
terms which have been discarded:
\begin{equation*}
  r^\eps (t,x) =\sum_{j\in \Z^d}\sum_{(k,\ell,m)\not\in I_j}a_k(t)
  \overline a_\ell(t) a_m(t)
  e^{i(\phi_k(t,x)-\phi_\ell(t,x)+\phi_m(t,x))/\eps}. 
\end{equation*}
We write
\begin{equation*}
 \phi_k(t,x)-\phi_\ell(t,x)+\phi_m(t,x) = \kappa_{ k,\ell,m} \cdot x -
 \omega_{k,\ell,m}t,
\end{equation*}
with $\kappa_{ k,\ell,m}\in \Z^d$, $\omega_{k,\ell,m}\in \Z$ and 
$\lvert \kappa_{ k,\ell,m}\rvert^2\not = \omega_{k,\ell,m}$, hence
\begin{equation}\label{eq:gap}
 \left\lvert \lvert \kappa_{
   k,\ell,m}\rvert^2 -\omega_{k,\ell,m} \right\rvert \ge 1.
\end{equation}
The error term ${\tt w}^\eps= \tu^\eps -\tv^\eps$ solves
\begin{equation*}
  i\eps \d_t {\tt w}^\eps +\eps^2 \Delta {\tt w}^\eps =\l \eps
  \( |{\tt w}^\eps+\tv^\eps|^2\({\tt
    w}^\eps+\tv^\eps\)-|\tv^\eps|^2\tv^\eps\) -\l\eps r^\eps\quad
  ;\quad {\tt w}^\eps_{\mid t=0}=0. 
\end{equation*}
By Duhamel's principle, this can be recasted as 
\begin{align*}
  {\tt w}^\eps(t) & = -i\l \int_0^t e^{i\eps(t-s)\Delta}\(|{\tt
    w}^\eps+\tv^\eps|^2\({\tt
    w}^\eps+\tv^\eps\)-|\tv^\eps|^2\tv^\eps\)(s)ds \\
&\quad +i\l \int_0^t  e^{i\eps (t-s)\Delta} r^\eps(s)ds. 
\end{align*}
Denote 
\begin{equation*}
  R^\eps(t)=\int_0^t  e^{i\eps(t-s)\Delta} r^\eps(s)ds.
\end{equation*}
Since $W$ is an algebra, and the norm in $W$ controls the
$L^\infty$-norm, it suffices to prove
\begin{equation*}
  \|R^\eps\|_{L^\infty([0,T];W)}=\O(\eps). 
\end{equation*}
We compute
\begin{equation*}
R^\eps(t,x)=  \sum_{j\in \Z^d}\sum_{(k,\ell,m)\not\in
  I_j}b_{k,\ell,m}(t,x),
  \end{equation*}
where
\begin{equation*}
b_{k,\ell,m}(t,x)=\int_0^t a_k(s)
  \overline a_\ell(s) a_m(s) \exp\( i\frac{\kappa_{ k,\ell,m} \cdot x
    + \lvert \kappa_{ k,\ell,m}\rvert^2s-
 \omega_{k,\ell,m}s}{\eps}\)ds.  
\end{equation*}
Proposition~\ref{prop:approxOG} then follows from one integration by
parts (integrate the exponential), along with \eqref{eq:gap} and
Lemma~\ref{lem:exista}.  

\subsubsection*{Acknowledgements} The authors wish to thank
Beno\^{\i}t Gr\'ebert for stimulating discussions on this work.

\bibliographystyle{amsplain}
\bibliography{../biblio}

\providecommand{\bysame}{\leavevmode\hbox to3em{\hrulefill}\thinspace}
\providecommand{\MR}{\relax\ifhmode\unskip\space\fi MR }
\providecommand{\MRhref}[2]{%
  \href{http://www.ams.org/mathscinet-getitem?mr=#1}{#2}
}
\providecommand{\href}[2]{#2}
\begin{thebibliography}{10}

\bibitem{Bam03}
D.~Bambusi, \emph{Birkhoff normal form for some nonlinear {PDE}s}, Comm. Math.
  Phys. \textbf{234} (2003), no.~2, 253--285. \MR{1962462 (2003k:37121)}

\bibitem{BG06}
D.~Bambusi and B.~Gr{\'e}bert, \emph{Birkhoff normal form for partial
  differential equations with tame modulus}, Duke Math. J. \textbf{135} (2006),
  no.~3, 507--567. \MR{2272975 (2007j:37124)}

\bibitem{Bou96}
J.~Bourgain, \emph{Construction of approximative and almost periodic solutions
  of perturbed linear {S}chr\"odinger and wave equations}, Geom. Funct. Anal.
  \textbf{6} (1996), no.~2, 201--230. \MR{1384610 (97f:35013)}

\bibitem{Bou98}
\bysame, \emph{Quasi-periodic solutions of {H}amiltonian perturbations of 2{D}
  linear {S}chr\"odinger equations}, Ann. of Math. (2) \textbf{148} (1998),
  no.~2, 363--439. \MR{1668547 (2000b:37087)}

\bibitem{Car07}
R.~Carles, \emph{Cascade of phase shifts for nonlinear {S}chr\"odinger
  equations}, J. Hyperbolic Differ. Equ. \textbf{4} (2007), no.~2, 207--231.
  \MR{2329383 (2008c:35300)}

\bibitem{CDS10}
R.~Carles, E.~Dumas, and C.~Sparber, \emph{Multiphase weakly nonlinear
  geometric optics for {S}chr\"odinger equations}, SIAM J. Math. Anal.
  \textbf{42} (2010), no.~1, 489--518. \MR{2607351}

\bibitem{cricri}
C.~Cheverry, \emph{Cascade of phases in turbulent flows}, Bull. Soc. Math.
  France \textbf{134} (2006), no.~1, 33--82. \MR{2233700 (2007d:76142)}

\bibitem{Iturbulent}
J.~Colliander, M.~Keel, G.~Staffilani, H.~Takaoka, and T.~Tao, \emph{Transfer
  of energy to high frequencies in the cubic defocusing nonlinear
  {S}chr\"odinger equation}, Invent. Math. \textbf{181} (2010), no.~1, 39--113.
  \MR{2651381}

\bibitem{CrWa94}
W.~Craig and C.~E. Wayne, \emph{Periodic solutions of nonlinear {S}chr\"odinger
  equations and the {N}ash-{M}oser method}, Hamiltonian mechanics ({T}oru\'n,
  1993), NATO Adv. Sci. Inst. Ser. B Phys., vol. 331, Plenum, New York, 1994,
  pp.~103--122. \MR{1316671 (96d:35129)}

\bibitem{ElKuk}
L.~H. Eliasson and S.~Kuksin, \emph{{KAM} for the nonlinear {S}chr\"odinger
  equation}, Ann. Math. \textbf{172} (2010), no.~1, 371--435.

\bibitem{FaouBook}
E.~Faou, \emph{Geometric integration of {H}amiltonian {PDE}s and applications
  to computational quantum mechanics}, European Math. Soc., 2011, to appear.

\bibitem{FG11}
E.~Faou and B.~Gr\'ebert, \emph{Hamiltonian interpolation of splitting
  approximations for nonlinear pdes},  (2009),
  \url{http://fr.arxiv.org/abs/0912.2882}.

\bibitem{FG10}
\bysame, \emph{A {N}ekhoroshev type theorem for the nonlinear {S}chr\"odinger
  equation on the torus}, Preprint (2010),
  \url{http://arxiv.org/abs/1003.4845}.

\bibitem{GL}
L.~Gauckler and C.~Lubich, \emph{Nonlinear {S}chr\"odinger equations and their
  spectral semi-discretizations over long times}, Found. Comput. Math.
  \textbf{10} (2010), no.~2, 141--169. \MR{2594442}

\bibitem{Greb07}
B.~Gr{\'e}bert, \emph{Birkhoff normal form and {H}amiltonian {PDE}s}, Partial
  differential equations and applications, S\'emin. Congr., vol.~15, Soc. Math.
  France, Paris, 2007, pp.~1--46. \MR{2352816 (2009d:37130)}

\bibitem{Kuksin2}
S.~Kuksin, \emph{Oscillations in space-periodic nonlinear {S}chr\"odinger
  equations}, Geom. Funct. Anal. \textbf{7} (1997), no.~2, 338--363.
  \MR{1445390 (98e:35157)}

\bibitem{KP}
S.~Kuksin and J.~P{\"o}schel, \emph{Invariant {C}antor manifolds of
  quasi-periodic oscillations for a nonlinear {S}chr\"odinger equation}, Ann.
  of Math. (2) \textbf{143} (1996), no.~1, 149--179. \MR{1370761 (96j:58147)}

\bibitem{Lub08}
C.~Lubich, \emph{On splitting methods for {S}chr\"odinger-{P}oisson and cubic
  nonlinear {S}chr\"odinger equations}, Math. Comp. \textbf{77} (2008),
  no.~264, 2141--2153. \MR{2429878 (2009d:65114)}

\bibitem{vV05}
L.~van Veen, \emph{The quasi-periodic doubling cascade in the transition to
  weak turbulence}, Phys. D \textbf{210} (2005), no.~3-4, 249--261. \MR{2170619
  (2006d:76055)}

\bibitem{Wang}
W.-M. Wang, \emph{Quasi-periodic solutions of the {S}chr\"odinger equation with
  arbitrary algebraic nonlinearities}, Preprint (2009),
  \url{http://arxiv.org/abs/0907.3409}.

\end{thebibliography}

\end{document}